\title[Solitons and minimal hypersurfaces]{Soliton solutions of the mean curvature\\ flow and minimal hypersurfaces}
\author[N. Hungerb\"uhler \& T. Mettler]{Norbert Hungerb\"uhler \& Thomas Mettler}
\date{February 10, 2011}
\subjclass[2010]{49Q05}
\thanks{Research for this article was carried out while the authors were supported by the Swiss National Science Foundation, the first author by the grant 200020-124668  and the second by the postdoctoral fellowship PBFRP2-133545.}
\keywords{mean curvature flow, soliton solutions, minimal hypersurfaces, \MA systems, equivalence problem}
\newtheorem{theorem}{Theorem}[section]
\newtheorem*{ct}{Theorem 2.3}
\newtheorem{lemma}[theorem]{Lemma}
\newtheorem{proposition}[theorem]{Proposition}
\theoremstyle{definition}
\newtheorem*{remark}{Remark}
\newtheorem*{definition}{Definition}
\newcommand{\D}{D}
\renewcommand{\d}{\mathrm{d}}
\newcommand{\R}{\mathbb{R}}
\newcommand{\Y}{\mathbf{Y}}
\newcommand{\X}{\mathbf{X}}
\renewcommand{\H}{\mathbf{H}}
\newcommand{\MA}{Monge-Amp\`ere }
\newcommand{\W}{\mathbf{W}}
\newcommand{\0}{\mathbf{0}}
\numberwithin{equation}{section}
\begin{document}

\maketitle

\begin{abstract}
Let $(M,g)$ be an oriented Riemannian manifold of dimension at least $3$ and $\X \in \mathfrak{X}(M)$ a vector field. We show that the \MA differential system (M.A.S.) for $\X$-pseudosoliton hypersurfaces on $(M,g)$ is equivalent to the minimal hypersurface M.A.S. on $(M,\bar{g})$ for some Riemannian metric $\bar{g}$, if and only if $\X$ is the gradient of a function $u$, in which case $\bar{g}=e^{-2u}g$. Counterexamples to this equivalence for surfaces are also given. 
\end{abstract}

\section{Introduction}

Recall that a smooth family of hypersurfaces $F_t : \Sigma^n \to M^{n+1}$, $t \geq 0$, in a Riemannian manifold $(M,g)$ is called a solution of the mean curvature flow (M.C.F.) on $(0,T)$, $T>0$, if
$$
\aligned
\frac{d}{dt} F_t&=-\H, \quad &\textrm{on}&\; \Sigma \times (0,T),\\
F_0&=f, \quad &\textrm{on}&\; \Sigma,\\
\endaligned
$$
where $f : \Sigma \to M$ is a given initial hypersurface and $\H$ denotes the mean curvature vector of $F_t(\Sigma)$. Suppose there exists a conformal Killing vector field $\X$ on $M$ with flow $\varphi : M \times \R \to M$. A family of hypersurfaces $F_t$ is said to be a soliton solution of the M.C.F. with respect to the conformal Killing vector field $\X$ if $\tilde{F}_t=\varphi^{-1}(F_t,t)$ is stationary in normal direction, i.e.~$\tilde{F}_t(\Sigma)$ is the fixed hypersurface $f(\Sigma)$. In~\cite{MR1787070} it was shown that for a given initial hypersurface $f: \Sigma \to M$ to give rise to a soliton solution of the mean curvature flow it is necessary that
\begin{equation}\label{soli}
\H+\X^{\perp}=0,
\end{equation}
where $\perp$ denotes the $g$-orthogonal projection onto the normal bundle of the hypersurface $f : \Sigma \to M$. If $\X$ is Killing, then \eqref{soli} is also sufficient. 

Soliton solutions  have played an important r\^ole  in the development
of  the  theory  of  the  M.C.F.   Such  solutions  served,  e.g.,  as
tailor-made  comparison solutions  to investigate  the  development of
singularities  (e.g.~Angenent's   self-similarly  shrinking  doughnut,
see~\cite{MR1167827}).  Actually, soliton  solutions appear as blow-up
of  so  called type  II  singularities of  the  flow  of plane  curves
(see~\cite{MR1100205}).  Moreover, soliton solutions turn out to enjoy
certain stability properties and allow some insight into the behaviour
of   the  mean   curvature   flow  viewed   as   a  dynamical   system
(see~\cite{MR1787070}, \cite{MR1855161} and~\cite{MR2321890}).

In~\cite{MR1787070}  the boundary value  problem for  rotating soliton
solutions has been discussed. The corresponding local existence result
has been  generalised to arbitrary  Killing fields in~\cite{nhbr2009}.
For  rotating solitons  in  the euclidean  plane,  so called  yin-yang
curves,  a quantity was  identified that  remains invariant  along the
curve  (see~\cite{nhbr2009}).  This  invariant  allowed to  show  that
yin-yang curves  share fundamental geometric  properties with geodesic
curves.   In~\cite{nhbr2009}  the   corresponding  results  have  been
generalised   to   arbitrary   soliton   curves   on   surfaces   (see
Figure~\ref{fig-yinyang}).
\begin{figure}[h!]
\includegraphics[width=.5\linewidth]{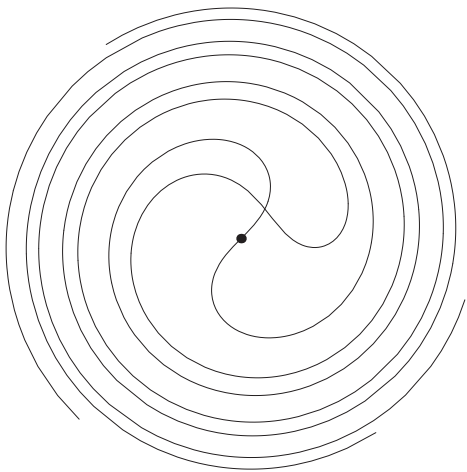}
\caption{If  the Gaussian  curvature of  the simply  connected ambient
  surface  is  less  than or  equal  to  0,  then two  soliton  curves
  intersect in  at most one point.   This fact is  illustrated here by
  two yin-yang curves rotating about the origin.}\label{fig-yinyang}
\end{figure}
In  addition,  it was  observed  in~\cite{nhbr2009}, that  translating
solitons in  the euclidean  plane, the so  called grim  reaper curves,
actually  are  geodesics  with   respect  to  a  conformally  deformed
Riemannian  metric.   Therefore  the  natural question  arose  whether
soliton curves  are (at least locally) {\em  always\/} geodesic curves
with  respect  to  a   modified  Riemannian  metric. This is not the case. On a surface $(M,g)$, the solutions of \eqref{soli} are immersed curves on $M$ which may be reparametrised to become geodesics of the Weyl connection $\nabla_{g,\X}$ given by
$$
(\Y_1,\Y_2)\mapsto (D_g)_{\Y_1} \Y_2-g(\Y_1,\Y_2)\X+g(\X,\Y_1)\Y_2+g(\X,\Y_2)\Y_1,
$$
where we have written $D_g$ for the Levi-Civita connection of $g$. The equation \eqref{soli} is parametrisation invariant and thus its solutions are naturally interpreted as the geodesics of a projective structure on $M$. Recall that a projective structure is an equivalence class of affine torsion-free connections, where two such connections are said to be equivalent if they have the same geodesics up to parametrisation. Recently in~\cite{MR2581355}, Bryant, Dunajski and Eastwood determined the necessary and sufficient local conditions for an affine torsion-free connection to be projectively equivalent to a Levi-Civita connection. Applying their results\footnote{Since the computations are somewhat complex, they have been carried out using maple. The maple file can be obtained from the authors upon request.} it follows that the Weyl connection whose geodesics are the yin-yang curves is not projectively equivalent to a Levi-Civita connection. However J\"urgen  Moser conjectured\footnote{Stated on the occasion of a seminar talk of the first author
at  the Institute  for Mathematical  Research (FIM)  at  ETH Z\"urich,
March 1999.} that  soliton  curves  can  at least  locally be  interpreted  as
geodesics of a Finsler metric.  Recent results about Finsler metrisability of path geometries by 
\'Alvarez  Paiva   and  Berck~\cite{alvarezberckfinslermetri} show that this is indeed the case. Of
course,  one can ask  analogue questions  also for  higher dimensional
solitons.   Before we  do that,  we generalise  the notion  of soliton
solutions slightly.
\begin{definition}
A hypersurface $f : \Sigma \to M$ solving \eqref{soli} for some vector field $\X \in \mathfrak{X}(M)$ will be called a $\X$-\textit{pseudosoliton hypersurface} of $(M,g)$.
\end{definition} Note that the $\0$-pseudosoliton hypersurfaces are the minimal hypersurfaces of $(M,g)$. It was observed in~\cite{MR1855161} (see also~\cite{MR1030675}) that solitons with respect to gradient vector fields correspond to minimal hypersurfaces. However it was left open if such a correspondence holds when the vector field is not the gradient of a smooth function. In this short article we provide an answer using the framework of \MA differential systems. 

In \S2 we will associate to the $\X$-pseudosoliton hypersurface equation on $(M,g)$ a \MA system on the unit tangent bundle of $M$ whose Legendre integral manifolds, which satisfy a natural transversality condition, locally correspond to $\X$-pseudosoliton hypersurfaces on $M$. We then show that for a gradient vector field $\X=\nabla_g u$ on $M$, the $\X$-pseudosoliton M.A.S. is equivalent to the minimal hypersurface M.A.S. on $(M,e^{-2u}g)$. This was already shown in~\cite{MR1855161}, albeit expressed in different language. We complete the picture by proving the
\begin{ct}
The $\X$-pseudosoliton M.A.S. on an oriented Riemannian manifold $(M,g)$ of dimension $n+1\geq 3$ is equivalent to a minimal hypersurface M.A.S. if and only if $\X$ is a gradient vector field. 
\end{ct}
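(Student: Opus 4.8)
The essential content is the forward implication, since the converse is the construction recalled in \S2. My plan is to work directly with the two \MA systems on the bundle $UM$ of cooriented contact elements of $M$ and to show that an equivalence forces $\X^{\flat}:=g(\X,\cdot)$ to be closed. First I would fix an adapted orthonormal coframing $\eta^0,\eta^1,\dots,\eta^n$ on the $g$-unit tangent bundle, with Levi-Civita connection forms $\eta^a_b$, and record the two \MA forms. Writing $\Omega=\eta^1\wedge\cdots\wedge\eta^n$ and $\Psi_0=\sum_i \eta^1\wedge\cdots\wedge\eta^0_i\wedge\cdots\wedge\eta^n$ (with $\eta^0_i$ in the $i$-th slot), the pseudosoliton form is $\Psi_{\X}=\Psi_0-\langle\X,\nu\rangle\,\Omega$, since on a transversal Legendre manifold $\Psi_0=H\,\Omega$ and $\X^{\perp}=\langle\X,\nu\rangle\,\nu$ in \eqref{soli}; the minimal form for $\bar g$ is the analogous $\bar\Psi_0$ built from a $\bar g$-adapted coframing. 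The point to keep in mind is that $\eta^0$ is, up to scale, the canonical contact form on the space of cooriented contact elements, which is intrinsic to $M$ and independent of any metric.

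The key structural step is to argue that the equivalence may be taken to be the identity on $UM$. An equivalence of the two \MA systems respects the transversality condition, hence maps fibres of $UM\to M$ to fibres and descends to a diffeomorphism $\psi$ of $M$; after replacing $\bar g$ by $\psi^{*}\bar g$ I may assume it covers the identity. A contactomorphism of the space of cooriented contact elements covering $\mathrm{id}_M$ must fix every contact element: if it sent the element $H_x\subset T_xM$ to $H_x'$, then $d\pi\circ d\Phi=d\pi$ together with preservation of the contact hyperplane forces $H_x=H_x'$, so it is the identity. Consequently the two differential ideals coincide, and there is a nonvanishing function $\lambda$ with $\Psi_{\X}\equiv\lambda\,\bar\Psi_0$ modulo the algebraic ideal generated by $\eta^0$ and $\d\eta^0$.

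It remains to extract $\X$ from this identity, and here the two orders separate. Comparing the principal parts, which are linear in the $\eta^0_i$ and encode the $g$- versus $\bar g$-trace of the second fundamental form on each contact hyperplane, forces the conformal structures of $g$ and $\bar g$ to agree, so that $\bar g=e^{2\phi}g$ for some function $\phi$ on $M$. I would then substitute this conformal change and use the transformation law of the mean curvature: expressed in the $g$-coframing, $\bar\Psi_0$ becomes, up to scale, the pseudosoliton form for a constant multiple of $\nabla_g\phi$. Matching the zeroth-order terms of $\Psi_{\X}$ and this expression then yields $\langle\X,\nu\rangle$ proportional to $\langle\nabla_g\phi,\nu\rangle$ for every unit normal $\nu$, whence $\X$ is a nonzero multiple of $\nabla_g\phi$, hence a gradient, in agreement with the correspondence of \S2.

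The main obstacle is the last paragraph: carrying out the conformal-change computation precisely enough to read off both that $\bar g$ must be conformal to $g$ and that the surviving lower-order term is exactly a gradient. This is the step that, in the generic non-gradient case, produces an obstruction, and it is a substantial coframe calculation (the content of the \emph{maple} computation referred to in the introduction). A cleaner but less elementary alternative would be to invoke the contact-invariance of the Euler--Lagrange property: the minimal-hypersurface system is variational, so any equivalent system must be variational too, while the pseudosoliton system is variational precisely when $\X^{\flat}$ is closed.
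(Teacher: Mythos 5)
Your reduction to an equivalence covering $\mathrm{id}_M$ is the step that fails, and it is precisely the difficulty the paper's argument is designed to avoid. An equivalence of \MA systems is by definition only a diffeomorphism $\psi : U \to \bar{U}$ identifying the two differential ideals; it is automatically a contactomorphism, but nothing forces it to preserve the fibration over $M$. The transversality condition $\mathcal{G}^*\omega \neq 0$ is not part of the ideal $\left\{\theta,\d\theta,\mu-X\omega\right\}$, so the inference ``respects the transversality condition, hence maps fibres of $UM \to M$ to fibres'' has no justification: contactomorphisms of the space of cooriented contact elements (Legendre transformations, for instance) routinely destroy the point fibration, and you have not shown that the $\tau$-fibres are intrinsically characterised by the ideal (for $n\geq 2$ they are indeed Legendre integral manifolds of it, but you would still need to show they are the only non-transverse ones and that $\psi$ preserves them). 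Once this step is removed, your third paragraph has no starting point; and you acknowledge yourself that the principal-part/conformal-change computation it relies on is not carried out.

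The paper instead extracts from an arbitrary ideal equivalence a fibration-free necessary condition (Lemma \ref{nec}): using the Lepage decomposition to kill the $\d\theta$-component of $\psi^*\bar{\mu}$ and the exactness of $\bar{\mu}\wedge\bar{\theta}$ coming from \eqref{exact}, one obtains $\d(\varphi\wedge\theta)=\d f\wedge\varphi\wedge\theta$ with $\varphi=\mu-X\omega$. A moving-frames computation on $F$ then shows that any $1$-form $\rho$ with $\d(\varphi\wedge\theta)=\rho\wedge\varphi\wedge\theta$ pulls back to $\lambda\omega_0+n\sum_k X_k\omega_k$, and that closedness of $\rho$ forces $\lambda=nX_0$, i.e.\ $\rho=n\,\tau^*(\X^{\flat})$; exactness of $\rho$ then makes $\X$ a gradient. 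Your closing alternative --- ``the pseudosoliton system is variational precisely when $\X^{\flat}$ is closed'' --- is the right idea and is essentially the paper's concluding remark, but that equivalence \emph{is} the frame computation you have skipped; it cannot be invoked as a known fact. The converse direction via the conformal change $\bar{g}=e^{-2u}g$ (Proposition \ref{solgrad}) is fine as you state it.
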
 
Theorem \ref{main} is wrong for $n=1$, i.e.~the case of curves on surfaces. We provide counterexamples and comment on the necessary and sufficient conditions for $\X$ in the surface case. Theorem \ref{main} provides an answer to the equivalence problem for specific M.A.S. in arbitrary dimension $n+1\geq 3$. The equivalence problem for general M.A.S. has been studied for $5$-dimensional contact manifolds in~\cite{MR1985469} and in various low dimensions in~\cite{MR1222276}.  

Throughout the article all manifolds are assumed to be connected and smoothness, i.e.~infinite differentiability is assumed. 

\subsection*{Acknowledgements} The second author is grateful to Robert Bryant for helpful discussions. 

\section{Equivalence of the soliton and minimal hypersurface equation}

\subsection{\MA systems}
Let $N$ be a $(2n+1)$-dimensional manifold carrying a contact structure, meaning a maximally nonintegrable codimension 1 subbundle $\D \subset TN$ which we assume to be given by the kernel of a globally defined contact form $\theta$. Recall that a $n$-dimensional submanifold $f: \Sigma \to N$ which satisfies $f^*\theta=0$ is called a Legendre submanifold of $(N,\D)$. A Monge-Amp\`ere differential system on $(N,\D)$ is a differential ideal $\mathcal{M} \subset \mathcal{A}^*(N)$ in the exterior algebra of differential forms on $N$ given by
$$
\mathcal{M}=\left\{\theta,\d\theta,\varphi\right\},
$$ 
where $\varphi \in \mathcal{A}^n(N)$ is a $n$-form.\footnote{More generally one can define a M.A.S. to be a differential ideal which is only locally generated by a contact ideal and an $n$-form. However for our purposes the above definition is sufficient.} The brackets $\{\;\}$ denote the algebraic span of the elements within, i.e.~the elements of $\mathcal{M}$ may be written as
$$
\alpha\wedge\theta+\beta\wedge \d\theta+\gamma\wedge \varphi,
$$
where $\alpha,\beta,\gamma$ are differential forms on $N$. Note that $\mathcal{M}$ is indeed a differential ideal since $\d\varphi$ lies in the contact ideal $\mathcal{C}=\left\{\theta,\d\theta\right\}$, cf.~\cite{MR1985469}. A Legendre submanifold of $(N,\D)$ which pulls-back to $0$ the $n$-form $\varphi$ as well will be called a Legendre integral manifold of $\mathcal{M}$. Two Monge-Amp\`ere systems $(N,\mathcal{M})$ and $(\bar{N},\bar{\mathcal{M}})$ are called equivalent if there exists a diffeomorphism $\psi : N \to \bar{N}$ identifying the two ideals. Note that this implies that $\psi$ is a contact diffeomorphism. 

\subsection{Minimal hypersurfaces via frames}
In order to fix notation we review the description of minimal hypersurfaces using moving frames. For $n \geq 1$, let $(M,g)$ be an oriented Riemannian $(n+1)$-manifold, $\pi : F \to M$ its right principal $SO(n+1)$-bundle of positively oriented orthonormal frames and $\tau : U \to M$ its (sphere) bundle of unit tangent vectors. Write the elements of $F$ as $(p,e_0,\ldots,e_n)$ where $p \in M$ and $e_0,\ldots,e_n$ is a positively oriented $g$-orthonormal basis of $T_pM$. The Lie group $SO(n+1)$ acts smoothly from the right by
$$
(p,e_0,\ldots,e_n)\cdot r=\left(p,\sum_{i=0}^n e_i r_{i0},\dots, \sum_{i=0}^n e_i r_{in}\right),
$$
where $r_{ik}$ for $i,k=0,\ldots,n$ denote the entries of the matrix $r$. The map $\nu : F \to U$, given by $(p,e_0,\ldots,e_n) \mapsto (p,e_0)$ is a smooth surjection whose fibres are the $SO(n)$-orbits and thus makes $F$ together with its right action into a $SO(n)$-bundle over $U$. Here we embed $SO(n)$ as the Lie subgroup of $SO(n+1)$ given by 
$$
\left\{\left(\begin{array}{cc} 1 & 0 \\ 0 & r \end{array}\right) \in SO(n+1), r \in SO(n) \right\}.
$$
Let $\omega_i \in \mathcal{A}^1(F)$ denote the tautological forms of $F$ given by 
$$
\left(\omega_i\right)_{(p,e_0,\ldots,e_n)}(\xi)=g_p\left(e_i,\pi^{\prime}(\xi)\right), 
$$
and $\omega_{ik} \in \mathcal{A}^1(F)$ the Levi-Civita connection forms which satisfy $\omega_{ik}+\omega_{ki}=0$. The dual vector fields to the coframing $\left(\omega_i,\omega_{ik}\right), i < k$, will be denoted by $\left(\W_i,\W_{ik}\right)$. Recall that we have the structure equations
\begin{equation}\label{struceq}
\aligned
\d\omega_i+\sum_{k=0}^n\omega_{ik}\wedge\omega_k&=0,\\
\d\omega_{ik}+\sum_{l=0}^n\omega_{il}\wedge\omega_{lk}&=\Omega_{ik},
\endaligned
\end{equation}
where $\Omega_{ik} \in \mathcal{A}^2(F)$ are the curvature forms. Denote by $\hat{\omega}_i$ the wedge product of the forms $\omega_1,\ldots \omega_n$, with the $i$-th form omitted
$$
\hat{\omega}_i=\omega_1\wedge\cdots\wedge\omega_{i-1}\wedge\omega_{i+1}\wedge \cdots \wedge \omega_n.
$$
For $n=1$ set $\hat{\omega}_1\equiv1.$ Note that the forms 
$$
\aligned
\theta&=\omega_0,\\ 
\omega&=\omega_1\wedge \cdots \wedge \omega_n,\\ 
\mu&=-\frac{1}{n}\sum_{i=1}^n (-1)^{i-1}\omega_{0i} \wedge \hat{\omega}_i,\\
\endaligned
$$
are $\nu$-basic, i.e.~pullbacks of forms on $U$ which, by abuse of language, will also be denoted by $\theta,\omega,\mu$. Since
\begin{equation}\label{contact}
\d\omega_0=-\sum_{k=1}^n\omega_{0k}\wedge \omega_k
\end{equation}
the $1$-form $\theta$ is a contact form. Note also that
\begin{equation}\label{exact}
\d\omega+(-1)^{n-1}\,n\,\mu\wedge\theta=0. 
\end{equation}
The geometric significance of these forms is the following: Suppose $f : \Sigma \to M$ is an oriented hypersurface and $\mathcal{G}_f : \Sigma \to U$ its orientation compatible Gauss lift. In other words the value of $\mathcal{G}_f$ at $p \in \Sigma$ is the unique unit vector at $f(p)$ which is $g$-orthogonal to $f^{\prime}(T_p\Sigma)$ and together with a positively oriented basis of $T_p\Sigma$ induces the positive orientation of $T_{f(p)}M$. By construction we have 
\begin{equation}\label{normal}
\mathcal{G}_f^*\theta=0
\end{equation}
and simple computations show that
\begin{equation}\label{legendre}
\mathcal{G}_f^*\omega=\omega_{f^*g},
\end{equation}
where $\omega_{f^*g}$ denotes the Riemannian volume form on $\Sigma$ induced by $f^*g$. Suppose $\tilde{f} : V \subset \Sigma\to F$ is a local framing covering $\mathcal{G}_f$ and $f$. Then pulling back \eqref{normal} and using \eqref{contact} gives 
$$
\sum_{k=1}^n\tilde{f}^*{\omega_{0k}}\wedge \tilde{f}^*{\omega_k}=0.
$$  
The independence \eqref{legendre} implies that the forms $\varepsilon_i=\tilde{f}^*\omega_i$ are linearly independent and thus Cartan's lemma yields the existence of functions $h_{ik} : V \to \R$, symmetric in the indices $i,k$, such that
$$
\tilde{f}^*\omega_{0i}=\sum_{k=1}^n h_{ik} \varepsilon_k. 
$$
In particular we have 
\begin{equation}\label{minimal}
\mathcal{G}_f^*\mu=-H \varepsilon_1\wedge\cdots\wedge\varepsilon_n,
\end{equation}
where $H=\frac{1}{n}\sum_{i=1}^n h_{ii}$ is the mean curvature of the hypersurface $f : \Sigma \to M$. Conversely if $\mathcal{G} : N \to U$ is an orientable $n$-submanifold with $\mathcal{G}^*\theta=0$ and $\mathcal{G}^*\omega \neq 0$, then $\tau \circ \mathcal{G} : N \to M$ is an immersion. Shrinking $N$ if necessary we can assume that $f=\tau \circ \mathcal{G} : N \to M$ is a hypersurface which can be oriented in such a way that its Gauss lift agrees with $\mathcal{G}$. Thus the Legendre integral manifolds $\mathcal{G} : \Sigma \to U$ of the M.A.S. $\mathcal{M}_g$ on $U$ given by
$$
\mathcal{M}_g=\left\{\theta,\d\theta,\mu\right\}
$$
which satisfy the transversality conditions $\mathcal{G}^*\omega\neq0$ locally correspond to minimal hypersurfaces on $(M,g)$. 

\subsection{$\X$-pseudosoliton hypersurfaces via frames} 
Given a vector field $\X$ on $M$ define the functions $X_i : F \to \R$ by 
\begin{equation}\label{comp}
(p,e_0,\ldots,e_n) \mapsto g_p(\X(p),e_i).
\end{equation}
Of course $X_0$ is the $\nu$-pullback of a function on $U$ which will be denoted by $X$. Using \eqref{soli} and \eqref{minimal} it follows that an oriented hypersurface $f : \Sigma \to M$ is a $\X$-pseudosoliton hypersurface if and only if 
$$
\mathcal{G}_f^*\left(\mu-X\omega\right)=0. 
$$ 
Thus the Legendre integral manifolds $\mathcal{G} : \Sigma \to U$ of the M.A.S. $\mathcal{M}_{g,\X}$ on $U$ given by
$$
\mathcal{M}_{g,\X}=\left\{\theta,\d\theta,\mu-X\omega\right\}
$$
which satisfy the transversality conditions $\mathcal{G}^*\omega\neq0$ locally correspond to $\X$-pseudo\-soliton hypersurfaces on $(M,g)$. 
Now suppose $\X$ is a gradient vector field $\X=\nabla_g u$ for some smooth function $u : M \to \R$. Let $\bar{g}=e^{-2u}g$, $\bar{\pi} : \bar{F} \to M$ denote the bundle of positively oriented $\bar{g}$-orthonormal frames with canonical coframing $\bar{\omega}_i, \bar{\omega}_{ik}$ and $\tilde{\psi} : F \to \bar{F}$ the map which scales a $\bar{g}$-orthonormal frame by $e^{u}$. Then by definition
\begin{equation}\label{trafo1}
\tilde{\psi}^*\bar{\omega}_i=e^{-u}\omega_i,
\end{equation}
and the structure equations \eqref{struceq} yield
\begin{equation}\label{trafo2}
\tilde{\psi}^*\bar{\omega}_{ik}=\omega_{ik}+u_k\omega_i-u_i\omega_k,
\end{equation}
where we expand $\pi^*\d u=\sum_{k=0}^n u_k \omega_k$ for some smooth functions $u_k : F \to \R$. Note that $u_0$ is the $\nu$-pullback of the function $X$. Let $\bar{\tau} : \bar{U} \to M$ denote the $\bar{g}$-unit tangent bundle with canonical forms $\bar{\mu},\bar{\omega}$ and $\psi : U \to \bar{U}$ the map which scales a $g$-unit vector by $e^u$. Then \eqref{trafo1} implies
$$
\psi^*\bar{\omega}=e^{-nu}\omega,
$$
thus $\psi$ is a contact diffeomorphism. Moreover \eqref{trafo1} and \eqref{trafo2} yield
\begin{equation}\label{pullback}
\aligned
\psi^*\bar{\mu}&=-\frac{e^{-(n-1)u}}{n}\sum_{k=1}^n (-1)^{k-1}\left(\omega_{0k}+u_k\theta-u_0\omega_k\right)\wedge \hat{\omega}_k\\
&=-e^{-(n-1)u}\left(\mu-X\omega+\frac{1}{n}\theta\wedge\left(i_{\nabla_g u}\omega\right)\right)\\
\endaligned
\end{equation}
which can be written as $\alpha\wedge\theta+\gamma\wedge(\mu-X\omega)$ for some $(n-1)$-form $\alpha$ and some smooth real-valued function $\gamma$ on $U$. This yields 
$$
\psi^*\mathcal{M}_{e^{-2u}g}=\mathcal{M}_{g,\nabla_{g} u}.
$$
Summarising we have proved the
\begin{proposition}\label{solgrad}
Let $(M,g)$ be an oriented Riemannian manifold and $\X=\nabla_g u$ a gradient vector field. Then the $\X$-pseudosoliton M.A.S. on $(M,g)$ is equivalent to the minimal hypersurface M.A.S. on $(M,e^{-2u}g)$. 
\end{proposition}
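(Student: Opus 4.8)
The plan is to establish the equivalence by producing an explicit contact diffeomorphism between the two unit tangent bundles and verifying that it carries one \MA ideal onto the other. Because $\bar g = e^{-2u}g$, a $g$-unit vector $v$ becomes a $\bar g$-unit vector under the scaling $v\mapsto e^u v$, so fibrewise multiplication by $e^u$ defines a diffeomorphism $\psi : U\to\bar U$ covering the identity on $M$. To see how $\psi$ acts on the distinguished forms $\theta,\omega,\mu$ I would lift it to the orthonormal frame bundles by the map $\tilde\psi : F\to\bar F$ that rescales a $\bar g$-orthonormal frame by $e^u$ to produce a $g$-orthonormal one, and then read off the action on the canonical coframing.

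At the frame-bundle level the relevant input is the conformal change of the Levi-Civita connection. Matching tautological forms gives $\tilde\psi^*\bar\omega_i = e^{-u}\omega_i$, i.e.~\eqref{trafo1}; substituting this into the first equation of \eqref{struceq} for the barred coframing and solving for the connection forms (using that the answer must be antisymmetric in its indices) yields the standard transformation law $\tilde\psi^*\bar\omega_{ik}=\omega_{ik}+u_k\omega_i-u_i\omega_k$ of \eqref{trafo2}, where $\pi^*\d u=\sum_k u_k\omega_k$. As these forms are $\nu$-basic, both laws descend to $U$. From $\tilde\psi^*\bar\omega_0=e^{-u}\omega_0$ and the product formula for $\bar\omega$ I obtain $\psi^*\bar\theta=e^{-u}\theta$ and $\psi^*\bar\omega=e^{-nu}\omega$; the first shows $\psi$ preserves the contact distribution and hence is a contact diffeomorphism (so $\psi^*\d\bar\theta$ again lies in $\{\theta,\d\theta\}$), while the second shows that the transversality condition $\mathcal{G}^*\omega\neq0$ is preserved.

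The decisive step is the pullback of the generating $n$-form $\bar\mu$. Inserting \eqref{trafo1} and \eqref{trafo2} into the defining expression for $\bar\mu$, and using that $u_0$ is the $\nu$-pullback of $X$, I expect to arrive at \eqref{pullback}, namely $\psi^*\bar\mu=-e^{-(n-1)u}\bigl(\mu-X\omega+\frac{1}{n}\theta\wedge i_{\nabla_g u}\omega\bigr)$. The main obstacle — and the conceptual heart of why the correspondence requires the conformal metric — is to recognise that the spurious term $\frac{1}{n}\theta\wedge i_{\nabla_g u}\omega$ is irrelevant modulo the ideal: being divisible by $\theta$, it lies in the contact ideal $\{\theta,\d\theta\}$. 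Hence $\psi^*\bar\mu=\alpha\wedge\theta+\gamma\,(\mu-X\omega)$ for some $(n-1)$-form $\alpha$ and the nowhere-vanishing function $\gamma=-e^{-(n-1)u}$, so each generator of $\mathcal{M}_{e^{-2u}g}$ is pulled back into $\mathcal{M}_{g,\X}$; since $\gamma$ and the factor $e^{-u}$ relating $\bar\theta$ to $\theta$ never vanish, this inclusion is in fact an equality. Thus $\psi^*\mathcal{M}_{e^{-2u}g}=\mathcal{M}_{g,\nabla_g u}$, which is the claimed equivalence.
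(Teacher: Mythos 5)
Your proposal is correct and follows essentially the same route as the paper: the fibrewise scaling map $\psi$, its lift $\tilde\psi$ to the frame bundles, the transformation laws \eqref{trafo1} and \eqref{trafo2}, and the computation of $\psi^*\bar\mu$ culminating in \eqref{pullback}, with the $\theta$-divisible remainder absorbed into the contact ideal. Your added remarks on why $\psi$ is a contact diffeomorphism (via $\psi^*\bar\theta=e^{-u}\theta$) and why nonvanishing of $\gamma$ upgrades the inclusion of ideals to an equality are correct refinements of the same argument.
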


\subsection{The non-gradient case} Proposition \ref{solgrad} raises the question if there still exists a contact equivalence between minimal hypersurfaces and solitons if $\X$ is not a gradient vector field. We will argue next that this is not possible for $n \geq 2$, so assume in this subsection that $n\geq 2$. Before providing the arguments we recall a result from symplectic linear algebra. Suppose $(V,\Theta)$ is a symplectic vector space of dimension $2n$, i.e.~ $\Theta \in \Lambda^2(V^*)$ is non-degenerate. If a form $\beta$ of degree $s \leq p$  satisfies
\begin{equation}\label{lepage}
\beta\wedge\Theta^{(n-p)}=0,
\end{equation}
then $\beta=0$. This is a corollary of the Lepage decomposition theorem for $p$-forms on symplectic vector spaces. (cf.~\cite[Corollary 15.15]{MR882548}). Of course in our setting the symplectic vector spaces are the fibres of the contact subbundle $D$ and $\Theta$ is obtained by restricting $\d\theta$ to $D$.
\begin{lemma}\label{nec}
A necessary condition for the $\X$-pseudosoliton M.A.S. to be equivalent to the minimal hypersurface M.A.S. is the existence of an exact $1$-form $\rho$ such that 
$$
\d\left(\left(\mu-X\omega\right)\wedge \theta\right)=\rho\wedge\left(\mu-X\omega\right)\wedge\theta
$$ 
\end{lemma}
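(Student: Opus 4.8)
The plan is to distill from the minimal hypersurface M.A.S.\ a scale-invariant closed $(n+1)$-form and to transport it through the equivalence. The central object is the Poincar\'e--Cartan form $\Pi=\left(\mu-X\omega\right)\wedge\theta$ built from the generator $\varphi=\mu-X\omega$ of $\mathcal{M}_{g,\X}$.

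First I would show that $\Pi$ is, up to multiplication by a nonvanishing function, an invariant of the M.A.S.\ $\mathcal{M}_{g,\X}$. Since the $n$-form generator is pinned down only modulo the contact ideal and up to scale, a competing generator has the form $\varphi^{\prime}=\gamma\,\varphi+\alpha\wedge\theta+\beta\wedge\d\theta$ with $\gamma$ nonvanishing, $\alpha$ of degree $n-1$ and $\beta$ of degree $n-2$. The only obstruction to $\Pi$ being well defined is the term $\theta\wedge\beta\wedge\d\theta$. To eliminate it I would use that $\varphi=\mu-X\omega$ is $\d\theta$-primitive: a direct computation from \eqref{contact} gives $\mu\wedge\d\theta\equiv0$ and $\omega\wedge\d\theta\equiv0$ modulo $\theta$, so $\varphi\wedge\Theta=0$ on the contact subbundle $\D$. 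As primitivity is preserved under contact pullback, any generator coming from a minimal system is primitive as well; then restricting $\varphi^{\prime}-\gamma\,\varphi=\alpha\wedge\theta+\beta\wedge\d\theta$ to $\D$ forces $\beta\wedge\Theta=0$ there, and the symplectic linear algebra result \eqref{lepage} (applied with $p=n-1$, so that $\beta$ of degree $n-2\le p$ and $\beta\wedge\Theta^{\,n-p}=\beta\wedge\Theta=0$) yields $\beta\equiv0$ modulo $\theta$. Hence $\theta\wedge\varphi^{\prime}=\gamma\,\theta\wedge\varphi$, so $\Pi$ is well defined up to the nonvanishing factor $\gamma$. This is precisely where the hypothesis $n\ge2$ is used, and this normalisation is the step I expect to be the main obstacle.

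Next I would record the distinguishing feature of the minimal side. Applying \eqref{exact} to $\bar g$ gives $\bar\mu\wedge\bar\theta=\tfrac{(-1)^{n}}{n}\,\d\bar\omega$, so the Poincar\'e--Cartan form $\bar\Pi=\bar\mu\wedge\bar\theta$ of $\mathcal{M}_{\bar g}$ is exact, in particular closed. Now let $\psi:U\to\bar U$ be a contact diffeomorphism realising the equivalence, so $\psi^*\bar\theta=\lambda\theta$ with $\lambda$ nonvanishing. Because $\psi^*\bar\mu$ is a primitive $n$-form generating the same ideal as $\varphi$, the normalisation above shows $\psi^*\bar\mu=\gamma\,\varphi+\alpha\wedge\theta$ for some nonvanishing $\gamma$, and therefore $\psi^*\bar\Pi=\gamma\lambda\,\Pi$.

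Finally, writing $\psi^*\bar\Pi=f\,\Pi$ with $f=\gamma\lambda$ nonvanishing and applying $\d$ while using $\d\bar\Pi=0$ gives $0=\d f\wedge\Pi+f\,\d\Pi$, whence $\d\Pi=-\d(\log|f|)\wedge\Pi$. Setting $\rho=-\d(\log|f|)$, which is exact, yields exactly
$$
\d\left(\left(\mu-X\omega\right)\wedge\theta\right)=\rho\wedge\left(\mu-X\omega\right)\wedge\theta,
$$
as claimed.
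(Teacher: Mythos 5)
Your argument is essentially the paper's own proof: both reduce to showing that the $(n-2)$-form $\beta$ in the expansion $\psi^*\bar{\mu}=\alpha\wedge\theta+\beta\wedge\d\theta+\gamma\varphi$ vanishes on the contact subbundle via the Lepage corollary \eqref{lepage}, so that $\psi^*\left(\bar{\mu}\wedge\bar{\theta}\right)$ is a nonvanishing multiple of $\left(\mu-X\omega\right)\wedge\theta$, and then both invoke the exactness of $\bar{\mu}\wedge\bar{\theta}$ coming from \eqref{exact}. One bookkeeping slip: primitivity of the two generators gives $\left(\beta\wedge\Theta\wedge\Theta\right)\vert_{\D}=0$ rather than $\left(\beta\wedge\Theta\right)\vert_{\D}=0$, so the Lepage corollary must be applied with $p=n-2$ (then $\deg\beta=n-2\le p$ and $\beta\wedge\Theta^{n-p}=\beta\wedge\Theta^{2}=0$), which still yields $\beta\vert_{\D}=0$ and leaves the rest of your argument intact.
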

\begin{proof}
Write $\varphi=\mu-X\omega$ and suppose there exists a Riemannian metric $\bar{g}$ and a diffeomorphism $\psi : U \to \bar{U}$ such that $\psi^*\mathcal{M}_{\bar{g}}=\mathcal{M}_{g,\X}$. Then 
\begin{equation}\label{contactequi} 
\psi^*\bar{\mu}=\alpha\wedge\theta+\beta\wedge \d\theta+\gamma\wedge\varphi,
\end{equation}
where $\alpha$ is a $(n-1)$-form, $\beta$ a $(n-2)$-form and $\gamma$ a smooth real-valued function on $U$. Note that we have
\begin{equation}\label{zero}
\aligned
0&=\varphi \wedge \d\theta,\\
0&=\bar{\mu}\wedge \d\bar{\theta}.
\endaligned
\end{equation}
Wedging \eqref{contactequi} with $\psi^*\d\bar{\theta}$, using \eqref{zero} and that $\psi$ is a contact diffeomorphism gives
\begin{equation}\label{wedge}
\left(\beta\wedge\d\theta\wedge \d\theta\right)\vert_{\D}=0,
\end{equation}
where $\vert_{\D}$ denotes the restriction to the contact subbundle $\D \subset TU$. For $n=2$ equation \eqref{wedge} implies $\beta=0$. For $n \geq 3$ it follows with \eqref{lepage} and \eqref{wedge} that $\beta\vert_{\D}=0$, thus there exists a $(n-3)$-form $\beta^{\prime}$ such that
$$
\beta=\beta^{\prime}\wedge\theta.
$$
We can therefore assume that there exists a $(n-1)$-form $\alpha^{\prime}$ such that 
\begin{equation}\label{contactequi2} 
\psi^*\bar{\mu}=\alpha^{\prime}\wedge\theta+\gamma\wedge\varphi.
\end{equation}
Wedging both sides of \eqref{contactequi2} with $\psi^*\bar{\theta}$ gives
$$
\psi^*\left(\bar{\mu}\wedge\bar{\theta}\right)=\left(\alpha^{\prime}\wedge\theta+\gamma\wedge\varphi\right)\wedge\psi^*\bar{\theta}.
$$
this is equivalent to 
$$
\psi^*\left(\bar{\mu}\wedge\bar{\theta}\right)=\tilde{\gamma}\wedge\varphi\wedge\theta
$$
for some smooth non-vanishing real-valued function $\tilde{\gamma}$. Since $\bar{\mu}\wedge\bar{\theta}$ is an exact form, see \eqref{exact}, we must have 
$$
\d\xi=\d f\wedge\xi,
$$
where we have written $\xi=\varphi\wedge\theta$ and $f=-\ln \vert \tilde{\gamma}\vert$. 
\end{proof}

Using this Lemma we can proof the 
\begin{theorem}\label{main}
The $\X$-pseudosoliton M.A.S. on an oriented Riemannian manifold $(M,g)$ of dimension $n+1\geq 3$ is equivalent to a minimal hypersurface M.A.S. if and only if $\X$ is a gradient vector field. 
\end{theorem}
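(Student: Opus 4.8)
The forward implication is exactly Proposition~\ref{solgrad}, so everything rests on the reverse implication, and the plan is to feed the conclusion of Lemma~\ref{nec} into the explicit structure equations until it pins $\X$ down to a gradient. Writing $\xi=(\mu-X\omega)\wedge\theta$, the condition supplied by Lemma~\ref{nec} reads $\d\xi=\d f\wedge\xi$ for some function $f$; since $\d(e^{-f}\xi)=e^{-f}(\d\xi-\d f\wedge\xi)$, this is equivalent to the closedness of $\lambda\,\xi$ for the positive function $\lambda=e^{-f}$. So the first move is to replace the hypothesis by: there exists $\lambda>0$ with $\d(\lambda\xi)=0$.

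Next I would compute $\d(\lambda\xi)$ using two identities. From \eqref{exact} one has $\mu\wedge\theta=\tfrac{(-1)^n}{n}\,\d\omega$, so $\mu\wedge\theta$ is exact; and $\d(\omega\wedge\theta)=0$, which follows from \eqref{exact} together with \eqref{contact} since $\omega$ already contains all of $\omega_1,\dots,\omega_n$. Writing $\lambda\xi=\lambda\,\mu\wedge\theta-\lambda X\,\omega\wedge\theta$ and differentiating, the closedness condition collapses to the single identity
$$
\d\lambda\wedge\mu\wedge\theta=\d(\lambda X)\wedge\omega\wedge\theta .
$$

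The heart of the matter is to analyse this identity on the frame bundle $F$. Since $\lambda$ and $\lambda X$ are $\nu$-basic, $\d\lambda$ and $\d(\lambda X)$ are $\nu$-semibasic and expand in $\omega_0,\dots,\omega_n,\omega_{01},\dots,\omega_{0n}$ with no $\omega_{jk}$ ($1\le j<k\le n$) terms. Wedging $\d(\lambda X)$ with $\omega\wedge\theta$ annihilates every $\omega_0,\dots,\omega_n$ component, so the right-hand side is a combination of the ``single'' monomials $\omega_{0k}\wedge\omega\wedge\theta$ only. On the left-hand side, the $\omega_{0j}$-components of $\d\lambda$ produce ``double'' monomials $\omega_{0j}\wedge\omega_{0i}\wedge\hat\omega_i\wedge\theta$ with $i\neq j$; these are mutually linearly independent and independent of the single monomials, so comparing the two sides forces every $\omega_{0j}$-component of $\d\lambda$ to vanish. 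This is exactly where $n\ge2$ enters: for each $j$ one needs some index $i\neq j$ in $\{1,\dots,n\}$, which fails when $n=1$ and accounts for the failure of the statement for curves on surfaces. Hence $\lambda$ descends to a function on $M$.

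Finally, with $\d\lambda$ now a combination of $\omega_0,\dots,\omega_n$, I would match the surviving single monomials on both sides. Using the consequence $\d X_0\equiv-\sum_{k=1}^n X_k\,\omega_{0k}$ modulo $\omega_0,\dots,\omega_n$ of \eqref{struceq} and \eqref{comp}, the coefficients of $\omega_{0k}\wedge\omega\wedge\theta$ force the $\omega_k$-component of $\d\lambda$ to equal $-n\lambda X_k$ for $k=1,\dots,n$, at every frame. Because any tangent vector of $M$ arises as some $e_k$ orthogonal to a suitable $e_0$ (here $\dim M\ge2$ suffices), these relations globalise to $\d\lambda=-n\lambda\,g(\X,\cdot)$ as $1$-forms on $M$. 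As $\lambda>0$, this reads $g(\X,\cdot)=-\tfrac1n\,\d(\log\lambda)$, an exact form, so $\X=\nabla_g\!\big(-\tfrac1n\log\lambda\big)$ is a gradient field, completing the proof. I expect the genuine obstacle to be the bookkeeping of the previous paragraph: setting up the expansion on $F$ so that the single and double monomials are provably independent, and tracking the signs, is where the work lies, while the reductions in the first two steps are essentially formal.
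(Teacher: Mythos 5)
Your proof is correct, and it organizes the computation differently from the paper's. The paper first classifies all $1$-forms $\rho$ satisfying $\d\xi=\rho\wedge\xi$, showing they pull back to $F$ as $\nu^*\rho=\lambda\omega_0+n\sum_k X_k\omega_k$, and only then imposes exactness via $\d\rho=0$: wedging $\nu^*\d\rho$ with $\omega_0\wedge\hat{\omega}_1$ forces $\lambda=nX_0$, whence $\rho=n\,\tau^*(\X^{\flat})$ and exactness of $\rho$ finishes the argument. You bypass the classification of $\rho$ altogether by converting the exactness of $\rho=\d f$ into closedness of $e^{-f}\xi$ and expanding $\d(e^{-f}\xi)=0$ on $F$ in one pass; your reduction to $\d\lambda\wedge\mu\wedge\theta=\d(\lambda X)\wedge\omega\wedge\theta$ is valid (both $\mu\wedge\theta$ and $\omega\wedge\theta$ are indeed closed), the ``double'' monomials $\omega_{0j}\wedge\omega_{0i}\wedge\hat{\omega}_i\wedge\theta$, $i\neq j$, are genuinely independent of each other and of the ``single'' ones (the $(j,i)$ and $(i,j)$ terms omit different $\omega$'s), and the resulting relations $\lambda_{0j}=0$ and $\lambda_k=-n\lambda X_k$ reproduce exactly the paper's conclusion $\X^{\flat}=\tfrac{1}{n}\d f$. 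A merit of your route is that it exhibits, inside the main computation, precisely where $n\geq 2$ is used (the existence of an index $i\neq j$), a restriction that in the paper sits partly in Lemma \ref{nec} and partly in the unstated computation classifying $\rho$. One small attribution point: the identity $\d X_0\equiv -\sum_k X_k\omega_{0k}$ modulo semibasic forms comes from the equivariance relation \eqref{id} rather than from \eqref{struceq}. Both arguments rely equally on Lemma \ref{nec} for the ``only if'' direction and on Proposition \ref{solgrad} for the converse.
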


\begin{remark}
Before giving the proof we point out identities which hold for the functions $X_i$ (recall \eqref{comp} for their definition). Since $O=(\omega_{ik}) \in \mathcal{A}^1(F,\mathfrak{so}(n+1))$ is a connection form we have $O(\W_v)=v$, where $\W_v$ is the vector field obtained by differentiating the flow 
$$
\left((p,e_0,\ldots,e_n),t\right) \mapsto (p,e_0,\ldots,e_n) \cdot \exp(tv)
$$ 
and $v \in \mathfrak{so}(n+1)$, the Lie algebra of $SO(n+1)$. In particular this implies that the time $t$ flow of the vector field $\W_{ik}$ for $i<k$ maps the frame 
$$
(p,e_0,\ldots,e_i,\ldots,e_k,\ldots,e_n)
$$ to the frame
$$
\left(p,e_0,\ldots,\cos (t) e_i-\sin (t)e_k,\ldots, \sin (t)e_i+\cos (t)e_k,\ldots, e_n\right)
$$ 
and thus 
\begin{equation}\label{id}
\mathcal{L}_{\W_{ik}}X_j=\delta_{jk}X_i-\delta_{ij}X_k, 
\end{equation}
where $\mathcal{L}$ stands for the Lie-derivative. 
\end{remark}

\begin{proof}[Proof of Theorem \ref{main}]
We have 
$$
\d X_0=\sum_{i=0}^n P_i \omega_i-\sum_{k=1}^n X_k\omega_{0k}
$$
for some smooth functions $P_i : F \to \R$. From this it follows with straightforward computations that the $1$-forms $\rho$ on $U$ which satisfy $\d\xi=\rho \wedge \xi$ pull-back to $F$ to become
\begin{equation}\label{eq1}
\nu^*\rho=\lambda \omega_0+n\sum_{k=1}^n X_k \omega_k
\end{equation}
for a smooth function $\lambda : F \to \R$. Differentiating \eqref{eq1} gives
$$
\nu^*\d\rho=\d\lambda\wedge\omega_0-\lambda\sum_{k=1}^n\omega_{0k}\wedge\omega_k+n\sum_{k=1}^n\d X_k\wedge\omega_k-n\sum_{i=0}^n\sum_{k=1}^nX_k\omega_{ki}\wedge\omega_i.
$$
Wedging with $\omega_0\wedge\hat{\omega}_1$ yields
$$
\nu^*\d\rho\wedge\omega_0\wedge\hat{\omega}_1=\left(\lambda\, \omega_{01}-n\,\d X_1-n\,\sum_{k=1}^n X_k\,\omega_{1k}\right)\wedge\omega_0\wedge\omega.
$$
Using \eqref{id} we can expand
$$
\aligned
\d X_1\wedge\omega_0\wedge\omega&=\left(\left(\mathcal{L}_{\W_{01}}X_1\right)\omega_{01}+\sum_{k=1}^n\left(\mathcal{L}_{\W_{1k}}X_1\right)\omega_{1k}\right)\wedge\omega_0\wedge\omega\\
&=\left(X_0\,\omega_{01}-\sum_{k=1}^nX_k\,\omega_{1k}\right)\wedge\omega_0\wedge\omega.\\
\endaligned
$$
Concluding we get
$$
\nu^*\d\rho\wedge\omega_0\wedge\hat{\omega}_1=(\lambda-nX_0)\,\omega_{01}\wedge\omega_0\wedge\omega.
$$
Suppose the $\X$-pseudosoliton M.A.S. on $(M,g)$ is equivalent to a minimal hypersurface M.A.S. Then, by Lemma \ref{nec}, $\rho$ has to be exact, this implies
$$
\lambda=nX_0
$$
and thus
$$
\nu^*\rho=n\sum_{i=0}^n X_i \omega_i. 
$$
Note that if $\chi \in TF$ is a vector tangent to the frame $(p,e_0,\ldots,e_n)$ we have 
$$
\sum_{i=0}^n\left(X_i\omega_i\right)(\chi)=\sum_{i=0}^ng_p\left(g_p(\X(p),e_i)e_i,\pi^{\prime}(\chi)\right)=g_p\left(\X(p),\pi^{\prime}(\chi)\right),
$$
hence
$$
\rho=n\,\tau^*\left(\X^{\flat}\right),
$$
where $\X^{\flat}$ denotes the $g$-dual $1$-form to $\X$. The $1$-form $\rho$ is exact and thus $\rho=\d f$ for some real-valued function $f$ on $U$ which is locally constant on the fibres of $\tau : U \to M$. Since the $\tau$-fibres are connected, it follows that $f$ is constant on the $\tau$-fibres and thus equals the pullback of a smooth function $u$ on $M$ for which
$$
du=n\,\X^{\flat}.
$$ 
In other words $\X$ is a gradient vector field. Conversely if $\X$ is a gradient vector field, then the $\X$-pseudosoliton M.A.S. on $(M,g)$ is equivalent to the minimal hypersurface M.A.S. on $(M,e^{-2u}g)$ by Proposition \ref{solgrad}. 
\end{proof}

\begin{remark}
In~\cite{MR1985469}, Bryant, Griffiths and Grossman study the calculus of variations on contact manifolds in the setting of differential systems. In particular they give necessary and sufficient conditions for a M.A.S. to be locally of Euler-Lagrange type, i.e.~locally equivalent to a M.A.S. whose Legendre integral manifolds correspond to the solutions of a variational problem. In fact, if one replaces Lemma \ref{nec} with~\cite[Theorem 1.2]{MR1985469} a proof along the lines of Theorem \ref{main} shows that for $n \geq 2$ the $\X$-pseudosoliton M.A.S. is \textit{locally} equivalent to a M.A.S. of Euler-Lagrange type if and only if $\X$ is a gradient vector field. 
\end{remark}

\subsection{The surface case}

Recall that in the case $n=1$ of a surface $(M,g)$, the solutions of the $\X$-pseudosoliton equation \eqref{soli} are immersed curves on $M$ which may be reparametrised to become geodesics of a Weyl connection. In his Ph.D.~thesis~\cite{mettlerphd}, the second author has constructed a $10$-parameter family of Weyl connections on the $2$-sphere whose geodesics are the great circles, and thus in particular projectively equivalent to the Levi-Civita connection of the standard spherical metric. Inspection shows that there are Weyl connections in this $10$-parameter family whose vector field is not a gradient and thus they provide counterexamples to Theorem \ref{main} in the surface case.

This raises the question what the necessary and sufficient conditions for the $\X$-pseudosolitons curves are, in order to be the geodesics of a Riemannian metric. In~\cite{mettlerphd} it was also shown that on a surface locally every affine torsion-free connection is projectively equivalent to a Weyl connection. Finding the necessary and sufficient conditions thus comes down to finding the necessary and sufficient conditions for an affine torsion-free connection to be projectively equivalent to a Levi-Civita connection. Therefore the conditions follow by applying the results in~\cite{MR2581355} and we refer the reader to this source for further details.       

\providecommand{\bysame}{\leavevmode\hbox to3em{\hrulefill}\thinspace}
\providecommand{\MR}{\relax\ifhmode\unskip\space\fi MR }
% \MRhref is called by the amsart/book/proc definition of \MR.
\providecommand{\MRhref}[2]{%
  \href{http://www.ams.org/mathscinet-getitem?mr=#1}{#2}
}
\providecommand{\href}[2]{#2}

\flushleft

\small
\vspace{0.5cm}
\sc
Eidgen\"ossische Technische Hochschule Z\"urich, Switzerland\\
\vspace{0.1cm}
\textit{Email address:} \rm \href{mailto:norbert.hungerbuehler@math.ethz.ch}{norbert.hungerbuehler@math.ethz.ch}

\vspace{0.25cm}
\sc
University of California at Berkeley, CA, USA\\
\vspace{0.1cm}
\textit{Email address:} \rm \href{mailto:mettler@math.berkeley.edu}{mettler@math.berkeley.edu}

\end{document}